\documentclass[12pt,a4paper]{article}

\usepackage[utf8]{inputenc}
\usepackage[T1]{fontenc}
\usepackage{amsmath,amssymb,amsthm}
\usepackage{geometry}
\usepackage{xcolor}
\usepackage{authblk}
\usepackage[hidelinks]{hyperref}
\newtheorem{theorem}{Theorem}[section]
\newtheorem{lemma}[theorem]{Lemma}
\newtheorem{proposition}[theorem]{Proposition}
\newtheorem{corollary}[theorem]{Corollary}
\newtheorem{question}[theorem]{Question}

\theoremstyle{definition}
\newtheorem{definition}[theorem]{Definition}
\newtheorem{example}[theorem]{Example}

\theoremstyle{remark}
\newtheorem{remark}[theorem]{Remark}

\newcommand{\ZZ}{\mathbb{Z}}

\newcommand{\calB}{\mathcal{B}}

\newcommand{\Orb}{\mathcal{O}}
\newcommand{\diam}{\operatorname{diam}}

\title{Topological Dynamics of Pullback Maps on Full Shifts}
\author{Alonso Castillo-Ramirez\footnote{Email: alonso.castillor@academicos.udg.mx} \ and Luguis De Los Santos Ba\~{n}os\footnote{Email: luguis.banos@academicos.udg.mx}}
\affil{Departamento de Matemáticas, Centro Universitario de Ciencias Exactas e Ingenier\'ias, Universidad de Guadalajara, Guadalajara, M\'exico.}
\date{\today}

\begin{document}

\maketitle

\begin{abstract}
Let $G$ be a group, let $A$ be a finite alphabet, and let $\phi: G \to G$ be an endomorphism. We study the topological dynamics of the pullback map $\phi^* : A^G \to A^G$, given by $\phi^*(x)=x\circ\phi$, a canonical example of a generalized cellular automaton. In the one-dimensional case, where $G=\mathbb Z$ and $\phi_k(n)=kn$, we prove a sharp dichotomy: $\phi_k^*$ is equicontinuous precisely for $k\in\{-1,0,1\}$, and cofinitely sensitive otherwise. Although the fixed identity coordinate prevents transitivity on the full shift, the restriction to the natural invariant components is topologically mixing exactly when $k\notin\{-1,0,1\}$. We then extend the analysis to countable groups, showing that $\phi^*$ is equicontinuous if and only if every element of $G$ is eventually periodic under $\phi$, while the existence of a non-eventually-periodic element is equivalent to cofinite sensitivity and to the absence of equicontinuous points. Finally, we characterize Bernoulli measure preservation and strong mixing on the punctured configuration space in terms of injectivity and eventual periodicity.
\end{abstract}

\section{Introduction}

Cellular automata are discrete dynamical systems defined by local rules on configuration spaces. In the classical setting, the ambient space is a full shift $A^G$, where $G$ is a group and $A$ is a finite alphabet. The group $G$ acts on $A^G$ by the shift action, and the Curtis-Hedlund-Lyndon theorem characterizes cellular automata as continuous, shift-equivariant self-maps of $A^G$.

A useful generalization is obtained by allowing maps between full shifts over possibly different groups and replacing equivariance with equivariance relative to a group homomorphism \cite{AlonsoAngelAlejandro}. In this framework, every homomorphism $\phi:H\to G$ induces a canonical pullback map
\[
\phi^*:A^G\to A^H,\qquad \phi^*(x)=x\circ\phi.
\]
When $H=G$ and $\phi$ is an endomorphism, $\phi^*$ becomes a self-map of the full shift $A^G$. Although this map has the simplest possible local rule, its dynamics reflect the algebraic behavior of the endomorphism $\phi$.

The purpose of this article is to make this relationship explicit. We focus on standard notions from topological dynamics: equicontinuity, sensitivity, cofinite sensitivity, transitivity, and mixing. The main point is that these dynamical properties are governed by the eventual periodicity of elements of $G$ under iteration of $\phi$.

In the case $G=\ZZ$, every endomorphism $\phi_k$ is multiplication by an integer $k$. We show that the corresponding pullback map $\phi_k^*$ is equicontinuous exactly for $k\in\{-1,0,1\}$ and cofinitely sensitive otherwise. Since the origin is fixed by every endomorphism of $\ZZ$, transitivity cannot hold on the full shift. However, after restricting to the invariant sets determined by the value at the origin, transitivity and mixing hold exactly in the cofinitely sensitive cases.

For a general countable group, two different algebraic conditions arise. Cofinite sensitivity is equivalent to the existence of at least one element with infinite, non-eventually-periodic orbit. By contrast, topological mixing on each invariant component requires that no nontrivial element be eventually periodic. Thus, unlike the one-dimensional integer case, cofinite sensitivity and mixing on invariant components need not be equivalent in general.

In the classical theory of one-dimensional cellular automata, Hedlund (1969) established that a cellular automaton is surjective if and only if it preserves the uniform measure (see Theorem 5.4 in \cite{Hedlund1969}). In a similar spirit, the final section records the parallel measure-theoretic picture for Bernoulli measures. Measure preservation is controlled by the injectivity of the underlying endomorphism $\phi$, while global strong mixing is always obstructed by the fixed identity coordinate. On the punctured configuration space, this obstruction disappears, and strong mixing is again characterized by the absence of eventual periodicity.

\section{Preliminaries}\label{sec:preliminaries}

We begin by fixing the basic terminology used throughout the paper.

\begin{definition}
A \emph{topological dynamical system} is a pair $(X,T)$, where $X$ is a compact metric space and $T:X\to X$ is a continuous map. The iterates of $T$ are denoted by $T^n$, with $T^0=\mathrm{id}_X$ and $T^{n+1}=T\circ T^n$ for all $n\geq 0$.
\end{definition}

\begin{definition}
Let $(X,T)$ be a topological dynamical system. We say that $(X,T)$ is \emph{topologically transitive} if for every pair of nonempty open sets $U,V\subseteq X$, there exists $n>0$ such that
\[
T^n(U)\cap V\neq\emptyset.
\]
We say that $(X,T)$ is \emph{topologically mixing} if for every pair of nonempty open sets $U,V\subseteq X$, there exists $N\geq 0$ such that
\[
T^n(U)\cap V\neq\emptyset\qquad\text{for all }n\geq N.
\]
\end{definition}

\begin{definition}
Let $(X,d)$ be a compact metric space and let $T:X\to X$ be continuous. A point $x\in X$ is called an \emph{equicontinuous point} of $T$ if for every $\varepsilon>0$ there exists $\delta>0$ such that, for every $y\in B_\delta(x)$,
\[
d(T^n(x),T^n(y))<\varepsilon\qquad\text{for all }n\geq 0.
\]
Let $\Omega(T)$ denote the set of equicontinuous points. We say that $T$ is \emph{equicontinuous} if $\Omega(T)=X$, and \emph{almost equicontinuous} if $\Omega(T)$ is residual in $X$.
\end{definition}

\begin{definition}
A topological dynamical system $(X,T)$ is \emph{sensitive to initial conditions}, or simply \emph{sensitive}, if there exists $\varepsilon>0$ such that for every nonempty open set $U\subseteq X$, there exist $x,y\in U$ and $n>0$ satisfying
\[
d(T^n(x),T^n(y))>\varepsilon.
\]
Such an $\varepsilon$ is called a sensitivity constant.
\end{definition}

For a nonempty open set $U\subseteq X$ and $\varepsilon>0$, define
\[
N_T(U,\varepsilon)=\{n\in\ZZ_{\geq 0}:\diam(T^n(U))<\varepsilon\}.
\]

\begin{definition}
A topological dynamical system $(X,T)$ is \emph{cofinitely sensitive} if there exists $\varepsilon>0$ such that, for every nonempty open set $U\subseteq X$, the set $N_T(U,\varepsilon)$ is finite. Equivalently,
\[
\diam(T^n(U))\geq \varepsilon
\]
for all but finitely many $n$.
\end{definition}

Cofinite sensitivity strengthens ordinary sensitivity: separation at a fixed scale does not merely occur at some time, but persists for all sufficiently large times. For one-dimensional cellular automata on full shifts, K\r{u}rka's classification relates sensitivity and almost equicontinuity; see \cite{kuurka2003topological}.

Let $G$ be a group and $A$ a finite alphabet. We write $A^G$ for the full shift over $G$, which is the set of all functions of the form $x : G \to A$.  The left shift action of $G$ on $A^G$ is given by
\[
(gx)(h)=x(g^{-1}h),\qquad g,h\in G,
\]
for every configuration $x\in A^G$.

\begin{definition}\label{def:generalized-cantor-metric}
Let $G$ be a countable group and let $A$ be a finite alphabet. An \emph{exhaustion} of $G$ is a sequence $(E_n)_{n\geq 0}$ of finite subsets of $G$ such that
\[
\emptyset=E_0\subseteq E_1\subseteq E_2\subseteq \cdots,
\qquad
\bigcup_{n\geq 0}E_n=G.
\]
The associated \emph{Cantor metric} on $A^G$ is defined by
\[
d(x,y)=
\begin{cases}
0, & x=y,\\[2mm]
2^{-\max\{n\geq 0 \; : \; x|_{E_n}=y|_{E_n}\}}, & x\neq y.
\end{cases}
\]
\end{definition}

This metric induces the \emph{prodiscrete topology} on $A^G$, which is the product topology obtained by giving $A$ the discrete topology. Since $A$ is finite, the space $A^G$ is compact and metrizable. If $G$ is infinite and $|A|\geq 2$, then $A^G$ is a Cantor space. Different exhaustions define compatible metrics and hence the same topological dynamical notions.

The prodiscrete topology is generated by the cylinders
\[
C(F,q)=\{x\in A^G:x|_F=q\},
\]
where $F\subseteq G$ is a finite subset and $q\in A^F$.

The following definition of generalized cellular automata was introduced in \cite{AlonsoAngelAlejandro}.

\begin{definition}
Let $G$ and $H$ be groups and let $\phi:H\to G$ be a homomorphism. A \emph{$\phi$-cellular automaton} is a map $\tau:A^G\to A^H$ for which there exist a finite subset $S\subseteq G$, called a \emph{memory set}, and a local function $\mu:A^S\to A$ such that
\[
\tau(x)(h)=\mu\big((\phi(h^{-1})x)|_S\big)
\]
for all $x\in A^G$ and $h\in H$.
\end{definition}

The classical definition of a cellular automaton is recovered from the previous one when $G=H$ and $\phi = \text{id}_G$ (cf. \cite[Def. 1.4.1]{CSC10}).

Every homomorphism $\phi:H\to G$ induces a canonical $\phi$-cellular automaton
\[
\phi^*:A^G\to A^H,
\qquad
\phi^*(x)=x\circ\phi,
\]
called the \emph{pullback map}, which has memory set $S=\{e_G\}$ and local function equal to the identity function of $A$.

If $\phi:G\to G$ is an endomorphism and $|A|\geq 2$, then $(A^G,\phi^*)$ is not topologically transitive on the full shift. Indeed,
\[
(\phi^*)^n(x)(e_G)=x(e_G)
\]
for every $x\in A^G$ and every $n\geq 0$, because $\phi(e_G)=e_G$.

%%%%%%%%%%%%%%%%%%%%%%%%%%%%%%%%%%%%%%%%%%%%%

\section{The one-dimensional case}\label{sec:one-dimensional-case}

In this section we take $G=\ZZ$ and let $A$ be a finite alphabet with $|A|\geq 2$. We use the standard exhaustion $E_m=[-m,m]$ of $\ZZ$ and the corresponding Cantor metric. 

\begin{remark}\label{rem:integer-endomorphisms}
Every endomorphism of $\ZZ$ is multiplication by a unique integer $k$. We denote by $\phi_k:\ZZ\to\ZZ$ the map $\phi_k(n)=kn$, and by $\phi_k^*$ the induced pullback map on $A^\ZZ$. Thus
\[
\phi_k^*(x)(i)=x(ki),
\]
and
\[
(\phi_k^*)^n=\phi_{k^n}^*
\]
for all $n\geq 0$.
\end{remark}

Our goal is to classify the dynamics of the maps $\phi_k^*:A^\ZZ\to A^\ZZ$.

We shall use only one elementary symmetry of the full shift. The reversal map
\[
R:A^\ZZ\to A^\ZZ,\qquad R(x)(i)=x(-i),
\]
is a homeomorphism and an isometry for the standard Cantor metric, because each interval $[-m,m]$ is invariant under $i\mapsto -i$. Moreover, $\phi_{-1}^*=R$. This observation is the only symmetry fact needed in the one-dimensional classification.

\begin{remark}\label{rem:combinatorial-equicontinuity}
A configuration $x\in A^\ZZ$ is an equicontinuous point of $\phi_k^*$ if and only if, for every $m\geq 0$, there exists $\ell\geq 0$ such that
\[
x|_{[-\ell,\ell]}=y|_{[-\ell,\ell]}
\quad\Longrightarrow\quad
x|_{k^n[-m,m]}=y|_{k^n[-m,m]}
\]
for all $n\geq 0$.
\end{remark}

\begin{remark}\label{rem:isometry-equicontinuous}
Every isometry of a metric space is equicontinuous.
\end{remark}

\begin{proposition}\label{prop:trivial-a-equicontinuous}
For every $k\in\{-1,0,1\}$, the map $\phi_k^*$ is equicontinuous.
\end{proposition}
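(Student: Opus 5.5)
The plan is to treat the three values of $k$ separately, using Remark~\ref{rem:integer-endomorphisms} to write every iterate as $(\phi_k^*)^n=\phi_{k^n}^*$. Each case will then reduce either to Remark~\ref{rem:isometry-equicontinuous} or to a direct check with the combinatorial criterion of Remark~\ref{rem:combinatorial-equicontinuity}.

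\emph{Case $k=1$.} Here $\phi_1^*=\mathrm{id}$, so every iterate is the identity. For any $\varepsilon>0$ we may take $\delta=\varepsilon$, and equicontinuity at every point is immediate.

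\emph{Case $k=-1$.} We have $\phi_{-1}^*=R$, and $R$ is an isometry for the standard Cantor metric, as observed above. The iterates are $R^{2j}=\mathrm{id}$ and $R^{2j+1}=R$. All of these are isometries, so $d((\phi_{-1}^*)^n(x),(\phi_{-1}^*)^n(y))=d(x,y)$ for all $n$, and $\delta=\varepsilon$ works uniformly. Equivalently, Remark~\ref{rem:isometry-equicontinuous} applied to $R$ gives the claim.

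\emph{Case $k=0$.} We have $\phi_0^*(x)(i)=x(0)$ for every $i$, so $\phi_0^*(x)$ is the constant configuration with value $x(0)$. For $n\ge1$, $(\phi_0^*)^n=\phi_{0}^*$, since $0^n=0$; for $n=0$ the iterate is the identity. I would verify the criterion of Remark~\ref{rem:combinatorial-equicontinuity} with $\ell=m$. If $x|_{[-m,m]}=y|_{[-m,m]}$, the case $n=0$ is trivial, and for $n\ge1$ the set $k^n[-m,m]$ is $\{0\}\subseteq[-m,m]$, where $x$ and $y$ agree. Metrically, agreement on $[-m,m]$ with $m\ge0$ forces $x(0)=y(0)$, so the images under $\phi_0^*$ coincide. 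This gives $d((\phi_0^*)^n x,(\phi_0^*)^n y)\le d(x,y)$ for all $n$.

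No step is a real obstacle. The only care needed is the convention $0^0=1$, so that $(\phi_0^*)^0=\mathrm{id}$, and the check that the bound in each case is uniform in $n$. In fact all three maps are non-expansive, which gives uniform equicontinuity.
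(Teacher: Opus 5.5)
Your proof is correct and follows the paper's argument: the same three-case split, with $k=1$ the identity, $k=-1$ the reversal isometry $R$ handled via Remark~\ref{rem:isometry-equicontinuous}, and $k=0$ handled by noting that configurations agreeing at the origin have identical images. You also spell out that $(\phi_0^*)^n=\phi_0^*$ for $n\ge 1$ and that the $n=0$ iterate is the identity, which the paper leaves implicit.
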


\begin{proof}
If $k=1$, then $\phi_1^*$ is the identity. If $k=0$, then $\phi_0^*(x)$ is the constant configuration with value $x(0)$, so the image of any two configurations that agree at the origin is the same. If $k=-1$, then $\phi_{-1}^*=R$, where $R$ is the reversal isometry described above. Hence $\phi_{-1}^*$ is equicontinuous by Remark~\ref{rem:isometry-equicontinuous}.
\end{proof}

\begin{proposition}\label{prop:phi-a-cofinite-sensitive}
For every $k\in\ZZ\setminus\{-1,0,1\}$, the map $\phi_k^*$ is cofinitely sensitive.
\end{proposition}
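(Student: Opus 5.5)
We will prove the claim with sensitivity constant $\varepsilon=1$. Recall that with the exhaustion $E_m=[-m,m]$ we have $d(u,v)=1$ exactly when $u$ and $v$ agree on $E_0=\emptyset$ but differ on $E_1=\{-1,0,1\}$. The coordinate $0$ is fixed, so we must get the separation at $\pm1$. Thus it suffices to show the following for every nonempty open set $U$: for all but finitely many $n$, there exist $x,y\in U$ with $(\phi_k^*)^n(x)(1)\neq(\phi_k^*)^n(y)(1)$. Then $\diam((\phi_k^*)^n(U))\geq 1$ for those $n$, and $N_{\phi_k^*}(U,1)$ is finite.

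First we reduce to cylinders. Every nonempty open $U$ contains a cylinder $C=C([-m,m],q)$ for some $m\geq 0$ and $q\in A^{[-m,m]}$. Since $\diam(T^n(C))\leq\diam(T^n(U))$, it is enough to treat $C$. By Remark~\ref{rem:integer-endomorphisms}, $(\phi_k^*)^n=\phi_{k^n}^*$, and therefore
\[
(\phi_k^*)^n(x)(1)=x(k^n).
\]

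Now we use the hypothesis on $k$. Since $|k|\geq 2$, we have $|k^n|\geq 2^n$, so $k^n\notin[-m,m]$ whenever $2^n>m$. For each such $n$, pick two distinct letters $a\neq b$ in $A$, which is possible since $|A|\geq 2$. Let $x\in C$ take the value $a$ at $k^n$ and $y\in C$ take the value $b$ at $k^n$; both extend $q$ arbitrarily elsewhere. Then $x,y\in C$, and $\phi_{k^n}^*(x)$ and $\phi_{k^n}^*(y)$ differ at coordinate $1$. They agree at $0$, which is consistent with the distance formula, and their distance is exactly $1$. Hence
\[
N_{\phi_k^*}(U,1)\subseteq\{n : 2^n\leq m\},
\]
which is a finite set.

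There is no real obstacle in this argument. The only point needing care is the choice of metric scale: $\varepsilon$ must be chosen so that a discrepancy at coordinate $1$ (or $-1$) already yields distance at least $\varepsilon$, since the fixed coordinate $0$ can never produce separation. The condition $|k|\geq 2$ is used only to guarantee that $k^n$ eventually leaves every finite window $[-m,m]$.
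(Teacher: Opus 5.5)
Your proposal is correct and follows essentially the same argument as the paper: restrict to a cylinder or ball inside $U$, use $|k|\geq 2$ so that $k^n$ leaves the window, and change the value at $k^n$ to force disagreement at coordinate $1$ of the $n$-th iterate. The only differences are cosmetic. You take $\varepsilon=1$ instead of the paper's more conservative $2^{-1}$, and you build a fresh pair for each $n$, whereas the paper builds a single $y$ that separates from $x$ at every $n\geq N$ simultaneously.
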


\begin{proof}
Let $U\subseteq A^\ZZ$ be a nonempty open set. Choose $x\in U$. There exists $\ell>0$ such that
\[
B_{2^{-\ell}}(x)\subseteq U.
\]
Since $|k|>1$, the points $k^n$ are eventually outside $[-\ell,\ell]$ and are pairwise distinct. Choose $N$ such that $|k^n|>\ell$ for all $n\geq N$. Define $y\in A^\ZZ$ by requiring $y|_{[-\ell,\ell]}=x|_{[-\ell,\ell]}$ and
\[
y(k^n)\neq x(k^n)\qquad\text{for all }n\geq N,
\]
with arbitrary values elsewhere. Then $y\in U$. For every $n\geq N$,
\[
(\phi_k^*)^n(x)(1)=x(k^n)\neq y(k^n)=(\phi_k^*)^n(y)(1).
\]
Thus
\[
d((\phi_k^*)^n(x),(\phi_k^*)^n(y))\geq 2^{-1}
\]
for every $n\geq N$. Therefore $\diam((\phi_k^*)^n(U))\geq 2^{-1}$ for all sufficiently large $n$, so $\phi_k^*$ is cofinitely sensitive.
\end{proof}

\begin{corollary}\label{cor:no-eq-points-Z}
If $k\notin\{-1,0,1\}$, then $\phi_k^*$ has no equicontinuous points.
\end{corollary}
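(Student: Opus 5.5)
The plan is to derive the corollary directly from Proposition~\ref{prop:phi-a-cofinite-sensitive}, via the elementary implication that a cofinitely sensitive system has no equicontinuous points. Let $\varepsilon=2^{-1}$ be the constant produced in the proof of that proposition. Arguing by contradiction, suppose $x\in A^\ZZ$ is an equicontinuous point of $\phi_k^*$ with $k\notin\{-1,0,1\}$.

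Apply the definition of equicontinuous point with tolerance $\varepsilon/4$. This gives $\delta>0$ such that every $y\in B_\delta(x)$ satisfies
\[
d\big((\phi_k^*)^n(x),(\phi_k^*)^n(y)\big)<\varepsilon/4\qquad\text{for all } n\geq 0.
\]
Take $U=B_\delta(x)$, which is a nonempty open set. For any $y,z\in U$ and any $n$, the triangle inequality yields
\[
d\big((\phi_k^*)^n(y),(\phi_k^*)^n(z)\big)<\varepsilon/2 .
\]
Hence $\diam\big((\phi_k^*)^n(U)\big)\leq\varepsilon/2<\varepsilon$ for every $n\geq 0$. This means $N_{\phi_k^*}(U,\varepsilon)=\ZZ_{\geq 0}$, which is infinite and contradicts cofinite sensitivity.

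Alternatively, one can reach the contradiction directly from the construction in the proof of Proposition~\ref{prop:phi-a-cofinite-sensitive}. For any $\ell$, that construction produces $y$ agreeing with $x$ on $[-\ell,\ell]$ but differing at coordinate $1$ after iterating $n\geq N$ times. This violates the criterion of Remark~\ref{rem:combinatorial-equicontinuity} with $m=1$.

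No step is a real obstacle. The only care needed is to use a strict inequality so that the diameter bound stays strictly below $\varepsilon$; taking $\varepsilon/4$ rather than $\varepsilon/2$ handles this.
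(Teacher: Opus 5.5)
Your proof is correct. Your main argument takes a different route from the paper's, while your ``alternative'' paragraph is essentially the paper's proof.

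In the main argument you use Proposition~\ref{prop:phi-a-cofinite-sensitive} as a black box and apply the general fact that a cofinitely sensitive system (indeed, any sensitive system) has no equicontinuous points. The triangle-inequality bound $\diam((\phi_k^*)^n(B_\delta(x)))\leq \varepsilon/2$ does this, and taking $\varepsilon/4$ correctly prevents the supremum from reaching $\varepsilon$.

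The paper instead reopens the construction inside that proposition's proof. Around an arbitrary $x$ and for every radius, it builds a single $y$ with $y|_{[-\ell,\ell]}=x|_{[-\ell,\ell]}$ and $y(k^n)\neq x(k^n)$ for all $n\geq N$.

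Your modular route is shorter, needs only the statement of the proposition, and works verbatim in any compact metric system. The paper's route gives something stronger than the absence of equicontinuous points. Every neighborhood of every $x$ contains one fixed point $y$ whose orbit stays at distance at least $2^{-1}$ from the orbit of $x$ at all sufficiently large times. Cofinite sensitivity alone only guarantees, for each large $n$, some separated pair in the neighborhood, and that pair may depend on $n$. The paper reuses this same construction in Theorem~\ref{thm:general-dichotomy}.
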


\begin{proof}
The argument in Proposition~\ref{prop:phi-a-cofinite-sensitive} can be performed inside every neighborhood of an arbitrary configuration $x$, producing a configuration $y$ whose orbit separates from the orbit of $x$ at the fixed coordinate $1$ for all sufficiently large times.
\end{proof}

If an arbitrary topological dynamical system is equicontinuous, then it is not cofinitely sensitive; however, the converse does not hold in general. Our next result records that this is an equivalence for $\phi_k ^*$.

\begin{corollary}\label{thm:Z-dichotomy}
For $k\in\ZZ$, the map $\phi_k^*$ is equicontinuous if and only if it is not cofinitely sensitive.
\end{corollary}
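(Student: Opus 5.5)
The corollary follows by combining Proposition~\ref{prop:trivial-a-equicontinuous} and Proposition~\ref{prop:phi-a-cofinite-sensitive} with one general observation: an equicontinuous system on a space that is not a single point cannot be cofinitely sensitive. I would therefore prove the two implications separately, splitting into the cases $k\in\{-1,0,1\}$ and $k\notin\{-1,0,1\}$.

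\emph{Equicontinuous implies not cofinitely sensitive.} Suppose $\phi_k^*$ is equicontinuous and fix any $\varepsilon>0$. Since $A^\ZZ$ is compact, equicontinuity at every point upgrades to uniform equicontinuity. If one prefers to avoid compactness, it suffices to use equicontinuity at a single point $x$, since we only need one bad open set. Either way, we get $\delta>0$ such that for all $y\in B_\delta(x)$ and all $n\geq 0$ we have $d((\phi_k^*)^n(x),(\phi_k^*)^n(y))<\varepsilon/3$. Put $U=B_\delta(x)$. For every $n$, the triangle inequality through $(\phi_k^*)^n(x)$ gives $\diam((\phi_k^*)^n(U))\leq 2\varepsilon/3<\varepsilon$. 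Hence $N_{\phi_k^*}(U,\varepsilon)=\ZZ_{\geq 0}$ is infinite. As $\varepsilon$ was arbitrary, no constant witnesses cofinite sensitivity.

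\emph{Not cofinitely sensitive implies equicontinuous.} I would argue by contraposition. If $\phi_k^*$ is not equicontinuous, then Proposition~\ref{prop:trivial-a-equicontinuous} forces $k\notin\{-1,0,1\}$, and then Proposition~\ref{prop:phi-a-cofinite-sensitive} shows that $\phi_k^*$ is cofinitely sensitive.

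There is no real obstacle here. The only point needing a little care is the general implication, where one must bound the diameter of the image of a whole neighborhood for every $n$, not just for one $n$. Equicontinuity at a single point already provides this uniformly in $n$, so compactness is not even required.
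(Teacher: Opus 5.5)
Your proof is correct and follows essentially the same route as the paper: both directions reduce to Propositions~\ref{prop:trivial-a-equicontinuous} and~\ref{prop:phi-a-cofinite-sensitive}, combined with the general fact that equicontinuity excludes cofinite sensitivity. The paper simply asserts that general fact, whereas you prove it with the $\varepsilon/3$ argument at a single equicontinuous point, which is a welcome addition (your qualifier that the space not be a single point is unnecessary, since the argument never uses it).
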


\begin{proof}
If $k\in\{-1,0,1\}$, then $\phi_k^*$ is equicontinuous by Proposition~\ref{prop:trivial-a-equicontinuous}, so it cannot be cofinitely sensitive. If $k\notin\{-1,0,1\}$, then $\phi_k^*$ is cofinitely sensitive by Proposition~\ref{prop:phi-a-cofinite-sensitive}, and therefore it is not equicontinuous.
\end{proof}

As the value at $0$ is fixed by every $\phi_k^*$, the full shift decomposes into the closed invariant subsets
\[ X_a = \{x\in A^\ZZ: x(0)=a\}, \qquad a \in A. \]

\begin{theorem}\label{thm:Z-transitivity}
Let $k\in\ZZ$. The map $\phi_k^*$ is topologically transitive on $X_a$ for all $a \in A$ if and only if $k\notin\{-1,0,1\}$.
\end{theorem}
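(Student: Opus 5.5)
Both directions are handled directly with cylinder sets, working inside $X_a$ with the relative topology. Nonempty open subsets of $X_a$ contain sets of the form $C([-\ell,\ell],p)\cap X_a$ with $p(0)=a$. So it suffices to show, for two such basic sets $U=C([-\ell,\ell],p)\cap X_a$ and $V=C([-\ell,\ell],q)\cap X_a$ (after enlarging to a common $\ell$), that $(\phi_k^*)^n(U)\cap V\neq\emptyset$ for some $n>0$. In fact I will aim for this for all sufficiently large $n$, which gives mixing.

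\emph{The case $k\notin\{-1,0,1\}$.} By Remark~\ref{rem:integer-endomorphisms}, $(\phi_k^*)^n=\phi_{k^n}^*$, so the condition $(\phi_k^*)^n(x)\in V$ reads $x(k^n i)=q(i)$ for $i\in[-\ell,\ell]$. Choose $N$ with $|k|^N>2\ell$. For $n\geq N$, the sets $k^n\cdot([-\ell,\ell]\setminus\{0\})$ and $[-\ell,\ell]$ are disjoint, because every nonzero $k^n i$ has absolute value at least $|k|^n>\ell$. The map $i\mapsto k^n i$ is injective, so there is no conflict among the prescribed values. Define $x$ by $x|_{[-\ell,\ell]}=p$, $x(k^n i)=q(i)$ for $0\neq i\in[-\ell,\ell]$, and arbitrary values elsewhere. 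At the origin the two requirements agree, since $p(0)=a=q(0)$. Then $x\in U$ and $(\phi_k^*)^n(x)\in V$. This shows $(\phi_k^*)^n(U)\cap V\neq\emptyset$ for every $n\geq N$, so $\phi_k^*$ is transitive, indeed mixing, on each $X_a$.

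\emph{The case $k\in\{-1,0,1\}$.} I need one $a\in A$ for which transitivity on $X_a$ fails; since $|A|\geq 2$, fix $a$ and some $b\neq a$.

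For $k=1$ the map is the identity. Take $U=C(\{1\},a)\cap X_a$ and $V=C(\{1\},b)\cap X_a$: these are disjoint nonempty sets, so $U\cap V=\emptyset$ for every $n$.

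For $k=-1$, the iterates alternate between $\mathrm{id}$ and $R$. Take $U=C(\{1,-1\},(a,a))\cap X_a$ and $V=C(\{1,-1\},(b,b))\cap X_a$. Both $U$ and $R(U)$ force the value $a$ at $\pm1$, so no iterate of $U$ meets $V$.

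For $k=0$, every $(\phi_0^*)^n(x)$ with $n\geq 1$ is the constant configuration with value $x(0)=a$. Its image therefore never meets $V=C(\{1\},b)\cap X_a$.

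The only point needing care is the disjointness and consistency of coordinates in the first case, in particular the shared coordinate $0$. This is exactly where the invariant component $X_a$ is used: it makes the constraint at the origin consistent. Everything else is routine.
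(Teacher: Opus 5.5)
Your proof is correct and follows the paper's argument essentially step for step. It uses cylinder windows that agree at the origin because $p(0)=q(0)=a$, a large power of $k$ that pushes the nonzero part of the target window out of the source window, and the same explicit obstructions for $k=1$, $k=0$ and $k=-1$ (the last via $(\phi_{-1}^*)^2=\mathrm{id}$). Your condition $|k|^N>2\ell$ on a common window is actually the right kind of bound for the disjointness step. The paper's stated condition $|k|^N r>\ell$ is too weak when $r>1$, since nonzero elements of $k^N[-r,r]$ can have absolute value as small as $|k|^N$.
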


\begin{proof}
Fix $a \in A$. Suppose first that $k\notin\{-1,0,1\}$. Let $U,V$ be nonempty open subsets of $X_a$. Choose cylinders
\[
C([-\ell,\ell],p)\cap X_a\subseteq U,
\qquad
C([-r,r],q)\cap X_a\subseteq V,
\]
where $\ell,r\geq 1$ and necessarily $p(0)=q(0)=a$. Choose $N$ so large that $|k|^N r>\ell$. Then the sets $[-\ell,\ell]$ and $k^N([-r,r]\setminus\{0\})$ are disjoint. Define $z\in A^\ZZ$ such that
\[
z|_{[-\ell,\ell]}=p,
\qquad
z(k^Nj)=q(j)\quad\text{for }j\in[-r,r].
\]
Such function $z : \ZZ \to A$ exists because the above conditions only overlap at $j=0$, where they both agree at the value $a$. Then $z\in U$ and $(\phi_k^*)^N(z)\in V$. Hence $\phi_k^*$ is topologically transitive on $X_a$.

Conversely, suppose $k\in\{-1,0,1\}$. If $k=1$, then $\phi_k^*$ is the identity, which is not transitive on $X_a$ because $X_a$ contains disjoint nonempty open sets. If $k=0$, then for every $x\in X_a$, the configuration $\phi_0^*(x)$ is the constant configuration with value $a$. Choose $b\in A\setminus\{a\}$ and set
\[
V=\{x\in X_a:x(1)=b\}.
\]
Then no positive iterate of any point of $X_a$ belongs to $V$, so transitivity fails. If $k=-1$, then $(\phi_{-1}^*)^2=\mathrm{id}$. Choose distinct symbols $b,c\in A$, and let
\[
U=\{x\in X_a:x(1)=b \text{ and } x(-1)=b\},
\qquad
V=\{x\in X_a:x(1)=c\}.
\]
Then $U$ and $V$ are nonempty open subsets of $X_a$, and both $U$ and its reversal are disjoint from $V$. Therefore $(\phi_{-1}^*)^n(U)\cap V=\emptyset$ for all $n\geq 0$, so transitivity fails.
\end{proof}

\begin{corollary}\label{cor:Z-mixing}
The map $\phi_k^*$ is topologically mixing on $X_a$ for all $a \in A$ if and only if $k\notin\{-1,0,1\}$.
\end{corollary}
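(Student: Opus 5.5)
The plan is to derive both directions from Theorem~\ref{thm:Z-transitivity}, using the fact that its construction for $k\notin\{-1,0,1\}$ works for \emph{every} sufficiently large $N$, not just one. For the converse direction, topological mixing implies topological transitivity. So if $k\in\{-1,0,1\}$, then $\phi_k^*$ is not transitive on $X_a$ by Theorem~\ref{thm:Z-transitivity}, hence not mixing. Here $|A|\geq 2$ guarantees the open sets used there are nonempty.

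For the forward direction, fix $a\in A$ and $k\notin\{-1,0,1\}$. Let $U,V$ be nonempty open subsets of $X_a$, and shrink them to basic sets
\[
C([-\ell,\ell],p)\cap X_a\subseteq U,\qquad C([-r,r],q)\cap X_a\subseteq V,
\]
with $\ell,r\geq 1$ and $p(0)=q(0)=a$. Choose $N$ with $|k|^N>\ell$. Let $n\geq N$ be arbitrary. For $j\in[-r,r]\setminus\{0\}$ we have $|k^n j|\geq |k|^n>\ell$, so the sets $[-\ell,\ell]$ and $k^n([-r,r]\setminus\{0\})$ are disjoint.

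Since $|k|\geq 2$, the map $j\mapsto k^n j$ is injective, so the prescriptions $z(k^n j)=q(j)$ for $j\in[-r,r]$ do not conflict with each other. They also do not conflict with $z|_{[-\ell,\ell]}=p$, because the only overlap is at $j=0$, where both give $a$. Define $z$ by these constraints and arbitrarily elsewhere. Then $z\in U$, and by Remark~\ref{rem:integer-endomorphisms},
\[
(\phi_k^*)^n(z)(j)=z(k^n j)=q(j)\quad\text{for } j\in[-r,r],
\]
so $(\phi_k^*)^n(z)\in V$. Since $z(0)=a$, $z$ lies in $X_a$, and its image lies in $X_a$ by invariance. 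Hence $(\phi_k^*)^n(U)\cap V\neq\emptyset$ for all $n\geq N$, which is mixing.

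There is no real obstacle here. The only point needing care is that $N$ must be chosen independently of $n$, and the bound $|k|^n\geq|k|^N>\ell$ provides exactly that uniformity.
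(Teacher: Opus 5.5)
Your proof is correct and follows the paper's argument. Non-mixing for $k\in\{-1,0,1\}$ comes from non-transitivity in Theorem~\ref{thm:Z-transitivity}, and mixing for $|k|\geq 2$ comes from rerunning that theorem's cylinder construction at every time $n\geq N$; your threshold $|k|^N>\ell$ is in fact the correct one, since the paper's condition $|k|^N r>\ell$ does not by itself keep $\pm k^N$ out of $[-\ell,\ell]$.
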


\begin{proof}
If $k\in\{-1,0,1\}$, then the map is not transitive on $X_a$ by Theorem~\ref{thm:Z-transitivity}, and hence it is not mixing. If $k\notin\{-1,0,1\}$, the proof of Theorem~\ref{thm:Z-transitivity} works for every sufficiently large $n$, because $|k|^n r>\ell$ for all large $n$. Therefore, for every pair of nonempty open sets $U,V\subseteq X_a$, there exists $N$ such that $(\phi_k^*)^n(U)\cap V\neq\emptyset$ for all $n\geq N$.
\end{proof}

%%%%%%%%%%%%%%%%%%%%%%%%%%%%%%%%%
\section{Pullback dynamics over countable groups}\label{sec:countable-groups}

The preceding section raises the question of which parts of the one-dimensional classification remain valid over general groups.

\begin{question}
Let $G$ be a group and let $\phi:G\to G$ be an endomorphism. Consider the pullback map $\phi^*:A^G\to A^G$.
\begin{enumerate}
\item For countable $G$, does the dichotomy between equicontinuity and cofinite sensitivity persist?
\item On the invariant components determined by the value at the identity, when is $\phi^*$ topologically mixing?
\item Is mixing on the invariant components equivalent to cofinite sensitivity in this general setting?
\end{enumerate}
\end{question}

Throughout this section, $G$ is a countable group and $A$ is a finite alphabet with $|A|\geq 2$.

\begin{definition}\label{def:eventually-periodic}
Let $\phi:G\to G$ be an endomorphism. An element $g\in G$ is \emph{periodic} under $\phi$ if there exists $p\geq 1$ such that $\phi^p(g)=g$. It is \emph{eventually periodic} if there exist $\ell\geq 0$ and $p\geq 1$ such that
\[
\phi^{\ell+p}(g)=\phi^\ell(g).
\]
Equivalently, the forward orbit $\Orb(g)=\{\phi^n(g):n\geq 0\}$ is finite.
\end{definition}

\subsection{Equicontinuity and cofinite sensitivity}

We first prove the general form of the equicontinuity--sensitivity dichotomy. The proof uses the freedom to choose a compatible prodiscrete metric, or equivalently an exhaustion of $G$, adapted to the finite forward orbits of $\phi$. Since all such metrics induce the same topology, equicontinuity and sensitivity do not depend on this auxiliary choice.

\begin{lemma}\label{lem:eventual-periodic-contraction}
Let $G$ be countable and let $\phi:G\to G$ be an endomorphism. The following are equivalent:
\begin{enumerate}
\item Every element of $G$ is eventually periodic under $\phi$.
\item There exists an exhaustion of $G$ such that
\[
d(\phi^*(x),\phi^*(y))\leq d(x,y)
\]
for all $x,y\in A^G$.
\end{enumerate}
\end{lemma}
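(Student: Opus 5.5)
The key observation is that the inequality $d(\phi^*(x),\phi^*(y))\leq d(x,y)$ for a Cantor metric is equivalent to a purely combinatorial condition on the exhaustion. Namely, each $\phi(E_n)$ must lie in $E_n$. Indeed, $\phi^*(x)|_{E_n}=\phi^*(y)|_{E_n}$ holds exactly when $x$ and $y$ agree on $\phi(E_n)$. So the plan is to prove both directions through this forward-invariance criterion.

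\emph{Step 1 (criterion).} I will show that, for an exhaustion $(E_n)$, the contraction inequality holds for all $x,y$ if and only if $\phi(E_n)\subseteq E_n$ for every $n$.

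For the ``if'' direction: if $x|_{E_n}=y|_{E_n}$, then $x$ and $y$ agree on $\phi(E_n)$, so $\phi^*(x)|_{E_n}=\phi^*(y)|_{E_n}$. Taking the maximal such $n$ gives the inequality.

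For the ``only if'' direction, suppose $g\in E_n$ but $\phi(g)\notin E_n$. Take $x,y$ that differ only at $\phi(g)$, using $|A|\geq 2$. Since $E_0=\emptyset$ and the $E_n$ are finite and increasing, $x$ and $y$ agree on $E_n$, so $d(x,y)\leq 2^{-n}$. On the other hand, $\phi^*(x)(g)\neq\phi^*(y)(g)$ with $g\in E_n$, so $d(\phi^*(x),\phi^*(y))>2^{-n}$. This contradicts the inequality.

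\emph{Step 2 (1 $\Rightarrow$ 2).} Enumerate $G=\{g_1,g_2,\dots\}$, which is possible since $G$ is countable (if $G$ is finite, the enumeration is finite and the sets stabilize). Set $E_0=\emptyset$ and
\[
E_n=\bigcup_{i\leq n}\Orb(g_i).
\]
Each forward orbit is finite by hypothesis, so each $E_n$ is finite. The sets are increasing and exhaust $G$. Each $E_n$ is a union of forward orbits, so $\phi(E_n)\subseteq E_n$. Step 1 then gives the contraction inequality.

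\emph{Step 3 (2 $\Rightarrow$ 1).} Given the exhaustion, Step 1 yields $\phi(E_n)\subseteq E_n$ for all $n$. Any $g\in G$ lies in some finite $E_n$, hence $\Orb(g)\subseteq E_n$ is finite. So $g$ is eventually periodic by Definition~\ref{def:eventually-periodic}.

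The only delicate point is the ``only if'' half of Step 1. There one must check that $\phi(g)\notin E_n$ genuinely forces $d(x,y)\leq 2^{-n}$, which uses that the $E_m$ are nested, while $d(\phi^*x,\phi^*y)\geq 2^{-(n-1)}$. This is a careful reading of the metric's definition rather than a real obstacle. The step that carries the actual content is Step 2: choosing the exhaustion built from forward orbits.
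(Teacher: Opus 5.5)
Your proof is correct and follows essentially the same route as the paper. For (1)$\Rightarrow$(2) you use the same forward-orbit exhaustion $E_n=\bigcup_{i\le n}\Orb(g_i)$, and for the converse the same idea that the contraction traps every orbit starting in $E_m$ inside the finite set $E_m$; the only difference is that you isolate the one-step criterion $\phi(E_n)\subseteq E_n$ via a single-site perturbation, whereas the paper iterates the inequality with a pair $x,y$ that disagree at every coordinate outside $E_m$.
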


\begin{proof}
Assume first that every element of $G$ is eventually periodic. Enumerate $G$ as $g_1,g_2,\ldots$, with $g_1=e_G$. Since every forward orbit is finite, define
\[
E_0=\emptyset,
\qquad
E_n=\bigcup_{i=1}^n \Orb(g_i)
\quad (n\geq 1).
\]
Then $(E_n)$ is an exhaustion of $G$ and $\phi(E_n)\subseteq E_n$ for every $n$. If $x$ and $y$ agree on $E_n$, then $\phi^*(x)$ and $\phi^*(y)$ also agree on $E_n$, because $\phi(E_n)\subseteq E_n$. Therefore $d(\phi^*(x),\phi^*(y))\leq d(x,y)$.

Conversely, suppose that such an exhaustion exists. By iterating the inequality, we have
\[
d((\phi^*)^n(x),(\phi^*)^n(y))\leq d(x,y)
\]
for all $n\geq 1$. Fix $m\geq 1$ and choose $x,y\in A^G$ such that $x|_{E_m}=y|_{E_m}$ and $x(g)\neq y(g)$ for every $g\in G\setminus E_m$. The inequality implies that $(\phi^*)^n(x)$ and $(\phi^*)^n(y)$ agree on $E_m$ for every $n\geq 1$. Hence, for each $g\in E_m$, the point $\phi^n(g)$ must lie in $E_m$ for every $n\geq 1$. Since $E_m$ is finite, the sequence $(\phi^n(g))_{n\geq 0}$ is eventually periodic. As $\bigcup_m E_m=G$, every element of $G$ is eventually periodic.
\end{proof}

\begin{theorem}\label{thm:general-dichotomy}
Let $G$ be a countable group and let $\phi:G\to G$ be an endomorphism. The following statements are equivalent:
\begin{enumerate}
\item There exists $g\in G$ which is not eventually periodic under $\phi$.
\item The map $\phi^*$ has no equicontinuous points.
\item The map $\phi^*$ is cofinitely sensitive.
\end{enumerate}
Consequently, $\phi^*$ is equicontinuous if and only if every element of $G$ is eventually periodic under $\phi$.
\end{theorem}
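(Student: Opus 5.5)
I will prove the cycle of implications (1)$\Rightarrow$(3)$\Rightarrow$(2)$\Rightarrow$(1), and then read off the final statement using Lemma~\ref{lem:eventual-periodic-contraction}. Throughout I fix an exhaustion $(E_n)$ and its Cantor metric. Since cofinite sensitivity and equicontinuous points are defined metrically, I also need that they depend only on the topology. On the compact space $A^G$ any two compatible metrics are uniformly equivalent, so the choice of exhaustion does not matter.

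\textbf{(1)$\Rightarrow$(3).} This generalizes Proposition~\ref{prop:phi-a-cofinite-sensitive}. Let $g$ be non-eventually-periodic. Then the elements $\phi^n(g)$, $n\geq 0$, are pairwise distinct. Indeed, a repetition $\phi^{\ell+p}(g)=\phi^\ell(g)$ would make $g$ eventually periodic. Choose $m$ with $g\in E_m$, and set $\varepsilon=2^{-m}$.

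Now let $U$ be nonempty and open. Pick $x\in U$ and $\ell$ with $B_{2^{-\ell}}(x)\subseteq U$. Since $E_\ell$ is finite and the orbit elements are distinct, there is $N$ with $\phi^n(g)\notin E_\ell$ for all $n\geq N$. Define $y$ to agree with $x$ on $E_\ell$ and to satisfy $y(\phi^n(g))\neq x(\phi^n(g))$ for all $n\geq N$. This is consistent because these coordinates are distinct and lie outside $E_\ell$, and it uses $|A|\geq 2$. Then $y\in U$, and for $n\geq N$
\[
(\phi^*)^n(x)(g)=x(\phi^n(g))\neq y(\phi^n(g))=(\phi^*)^n(y)(g).
\]
Hence $d((\phi^*)^n(x),(\phi^*)^n(y))\geq 2^{-m}$. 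It follows that $\operatorname{diam}((\phi^*)^n(U))\geq\varepsilon$ for all $n\geq N$, so $N_{\phi^*}(U,\varepsilon)$ is finite.

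\textbf{(3)$\Rightarrow$(2).} Suppose $x$ were an equicontinuous point. Take $\delta$ corresponding to $\varepsilon/4$, where $\varepsilon$ is the cofinite-sensitivity constant, and let $U=B_\delta(x)$. For every $n$, any two points $y,z\in U$ satisfy
\[
d\big((\phi^*)^n y,(\phi^*)^n z\big)\leq d\big((\phi^*)^n y,(\phi^*)^n x\big)+d\big((\phi^*)^n x,(\phi^*)^n z\big)<\varepsilon/2 .
\]
So $\operatorname{diam}((\phi^*)^n(U))\leq\varepsilon/2<\varepsilon$ for every $n$. Then $N_{\phi^*}(U,\varepsilon)$ is all of $\ZZ_{\geq 0}$, contradicting cofinite sensitivity.

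\textbf{(2)$\Rightarrow$(1).} I argue by contrapositive. If every element is eventually periodic, Lemma~\ref{lem:eventual-periodic-contraction} supplies an exhaustion in whose metric $\phi^*$ is $1$-Lipschitz. Iterating gives $d((\phi^*)^n x,(\phi^*)^n y)\leq d(x,y)$ for all $n$, so every point is equicontinuous, with $\delta=\varepsilon$. By the metric-independence remark this holds in any compatible metric. In particular equicontinuous points exist, so (2) fails.

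\textbf{Consequence.} If every element is eventually periodic, the previous paragraph shows $\phi^*$ is equicontinuous. Conversely, if some element is not eventually periodic, then (2) holds, so $\phi^*$ has no equicontinuous points; as $A^G\neq\emptyset$, it is not equicontinuous.

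The main point needing care is the metric-independence of the notions, used when passing to the lemma's adapted exhaustion. It follows from uniform equivalence of compatible metrics on a compact space: for each $\varepsilon$ there is $\eta$ such that $d'<\eta$ implies $d<\varepsilon$, and vice versa. The rest is direct.
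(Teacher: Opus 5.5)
Your proof is correct and is essentially the paper's: the same orbit-separation construction gives (1)$\Rightarrow$(3), and the same adapted exhaustion from Lemma~\ref{lem:eventual-periodic-contraction} gives the converse. The only reorganization is that you obtain (2) from (3) by the general triangle-inequality argument instead of rerunning the construction at each point; that argument also justifies the paper's implicit claim that an equicontinuous map cannot be cofinitely sensitive.

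One small index slip: agreement with $x$ on $E_\ell$ only gives $d(x,y)\le 2^{-\ell}$, not $y\in B_{2^{-\ell}}(x)$. Either require agreement on $E_{\ell+1}$ (taking $N$ with $\phi^n(g)\notin E_{\ell+1}$ for $n\ge N$), or, as the paper does, start from a cylinder $C(E_\ell,x|_{E_\ell})\subseteq U$.
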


\begin{proof}
Assume that $g\in G$ is not eventually periodic. Then the orbit $\Orb(g)$ is infinite. Choose $M$ such that $g\in E_M$, and set $\varepsilon=2^{-M}$.

We first show that there are no equicontinuous points. Let $x\in A^G$ and let $C(E_m,x|_{E_m})$ be an arbitrary basic neighborhood of $x$. Since $\Orb(g)$ is infinite, it eventually avoids the finite set $E_m$. Choose $N$ such that $\phi^n(g)\notin E_m$ for all $n\geq N$. Define $y\in A^G$ so that $y|_{E_m}=x|_{E_m}$ and
\[
y(\phi^n(g))\neq x(\phi^n(g))
\qquad\text{for all }n\geq N,
\]
extending arbitrarily elsewhere. Then $y$ belongs to the chosen neighborhood of $x$, but for every $n\geq N$,
\[
(\phi^*)^n(x)(g)=x(\phi^n(g))\neq y(\phi^n(g))=(\phi^*)^n(y)(g).
\]
Thus $d((\phi^*)^n(x),(\phi^*)^n(y))\geq \varepsilon$ for all $n\geq N$, so $x$ is not an equicontinuous point. Since $x$ was arbitrary, $\phi^*$ has no equicontinuous points.

The same construction proves cofinite sensitivity. Given any nonempty open set $U$, choose a cylinder $C(E_m,q)\subseteq U$. Pick $x\in C(E_m,q)$ and construct $y\in C(E_m,q)$ as above. Then $x,y\in U$ and
\[
\diam((\phi^*)^n(U))\geq \varepsilon
\]
for all sufficiently large $n$. Hence $\phi^*$ is cofinitely sensitive.

Conversely, suppose every element of $G$ is eventually periodic. By Lemma~\ref{lem:eventual-periodic-contraction}, there exists a compatible prodiscrete metric for which $\phi^*$ is a contraction. Hence $\phi^*$ is equicontinuous. In particular, it has equicontinuous points and it cannot be cofinitely sensitive. This proves the equivalences.
\end{proof}

\begin{example}\label{ex:matrix}
Let
\[
M=\begin{pmatrix}1&3\\2&4\end{pmatrix}\in GL_2(\mathbb{Q}),
\]
and let $M$ act on the additive group $G=\mathbb{Q}^2$. A direct computation shows that the orbit of $(1,0)^T$ under $M$ is infinite. Consequently, $(1,0)^T$ is not eventually periodic. Hence, by Theorem~\ref{thm:general-dichotomy}, the induced pullback system is cofinitely sensitive.
\end{example}

Cofinite sensitivity always implies sensitivity, but the converse does not hold in general. For instance, Sturmian subshifts are sensitive but not cofinitely sensitive; see \cite{Moothathu_2007}.

\subsection{Mixing on invariant components}\label{sec:Mixing on invariant components}

Since $\phi(e_G)=e_G$, the coordinate at $e_G$ is fixed by $\phi^*$. We therefore consider, for each $c\in A$, the closed invariant subset
\[
X_c=\{x\in A^G:x(e_G)=c\}.
\]
The next result shows that mixing on these components is governed by a stronger algebraic condition than cofinite sensitivity.

\begin{lemma}\label{lem:finite-escape}
Let $\phi:G\to G$ be an endomorphism. The following are equivalent:
\begin{enumerate}
\item No nontrivial element of $G$ is eventually periodic under $\phi$.
\item For every finite set $F\subseteq G\setminus\{e_G\}$ and every finite set $E\subseteq G$, there exists $N\geq 0$ such that
\[
\phi^n(F)\cap E=\emptyset
\qquad\text{for all }n\geq N.
\]
\end{enumerate}
\end{lemma}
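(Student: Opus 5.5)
The plan is to prove each implication directly, using only the definition of eventual periodicity (finite forward orbit).

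\emph{(1) $\Rightarrow$ (2).} Fix finite $F\subseteq G\setminus\{e_G\}$ and finite $E\subseteq G$. Since $F$ is finite, it suffices to treat a single $g\in F$ and take the maximum of the resulting thresholds.

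So fix $g\neq e_G$. By (1), $g$ is not eventually periodic, so $\Orb(g)$ is infinite. I claim the map $n\mapsto\phi^n(g)$ is injective. Suppose $\phi^a(g)=\phi^b(g)$ with $a<b$. Then with $\ell=a$ and $p=b-a$ we get $\phi^{\ell+p}(g)=\phi^\ell(g)$, so $g$ would be eventually periodic, a contradiction.

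Injectivity means each $h\in E$ equals $\phi^n(g)$ for at most one $n$. Hence the set $\{n:\phi^n(g)\in E\}$ has at most $|E|$ elements. Let $N_g$ be one more than its maximum, or $0$ if it is empty. Then $\phi^n(g)\notin E$ for all $n\geq N_g$. Setting $N=\max_{g\in F}N_g$ gives (2).

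\emph{(2) $\Rightarrow$ (1).} I argue by contraposition. Suppose $g\neq e_G$ is eventually periodic, so $\Orb(g)$ is a finite set. Take $F=\{g\}$ and $E=\Orb(g)$. Then $\phi^n(g)\in E$ for every $n$, so no $N$ can work, and (2) fails.

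This direction is valid even if some iterate $\phi^n(g)$ equals $e_G$. That case occurs, for instance, when $g$ lies in the kernel of $\phi$, so $g$ is eventually periodic with orbit ending at $e_G$. This is exactly why (1) is phrased for nontrivial elements rather than for elements avoiding the identity.

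\emph{Main obstacle.} There is no real difficulty here. The one point needing care is the injectivity of $n\mapsto\phi^n(g)$: an infinite orbit alone only guarantees that the orbit is not contained in $E$. Without injectivity, it would not rule out the orbit returning to $E$ infinitely often. The short argument above, that a repetition forces eventual periodicity, settles this, and the rest is bookkeeping over the finite set $F$.
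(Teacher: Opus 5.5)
Your proof is correct and follows essentially the same route as the paper. For (1)$\Rightarrow$(2), the paper also argues that an orbit meeting $E$ infinitely often would repeat an element and hence be eventually periodic; your injectivity of $n\mapsto\phi^n(g)$ just makes this explicit. For (2)$\Rightarrow$(1), it uses the same witness $F=\{g\}$, $E=\Orb(g)$.
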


\begin{proof}
Assume that no nontrivial element is eventually periodic. Then, for each $g\in F$, the orbit $\Orb(g)$ is infinite. Since $E$ is finite, the sequence $(\phi^n(g))_{n\geq 0}$ can meet $E$ only finitely many times; otherwise it would repeat some element of $E$ and the orbit of $g$ would be eventually periodic. Since $F$ is finite, a common $N$ works for every $g\in F$.

Conversely, suppose that some $g\neq e_G$ is eventually periodic. Then $\Orb(g)$ is finite. Taking $F=\{g\}$ and $E=\Orb(g)$, we have $\phi^n(F)\cap E\neq\emptyset$ for every $n\geq 0$, so the finite escape condition fails.
\end{proof}

\begin{theorem}\label{thm:general-mixing}
Let $\phi:G\to G$ be an endomorphism. For each $c\in A$, the system $(X_c,\phi^*)$ is topologically mixing if and only if no nontrivial element of $G$ is eventually periodic under $\phi$.
\end{theorem}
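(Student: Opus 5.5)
The plan is to prove both directions with cylinder arguments. The forward direction uses Lemma~\ref{lem:finite-escape} together with an injectivity observation. The converse builds an explicit pair of open sets that are never linked, which in fact shows that $(X_c,\phi^*)$ is not even transitive.

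\emph{Sufficiency.} Assume no nontrivial element is eventually periodic, and fix $c\in A$. First I would record that every iterate $\phi^n$ is injective. Suppose $\phi^n(f)=\phi^n(f')$. Then $\phi^n(f f'^{-1})=e_G$, so the orbit of $f f'^{-1}$ reaches the fixed point $e_G$. Hence $f f'^{-1}$ is eventually periodic, so it is trivial and $f=f'$. In particular $\phi^n(f)\neq e_G$ for $f\neq e_G$.

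Next, given nonempty open $U,V\subseteq X_c$, I would shrink them to $C(E,p)\cap X_c\subseteq U$ and $C(F,q)\cap X_c\subseteq V$. Here $E,F$ are finite and contain $e_G$, so $p(e_G)=q(e_G)=c$. Applying Lemma~\ref{lem:finite-escape} to $F\setminus\{e_G\}$ and $E$ gives $N$ with $\phi^n(F\setminus\{e_G\})\cap E=\emptyset$ for all $n\geq N$.

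For each such $n$, define $z$ by two requirements:
\[
z|_E=p,\qquad z(\phi^n(f))=q(f)\quad (f\in F),
\]
with arbitrary values elsewhere. These requirements are consistent for three reasons. Injectivity of $\phi^n$ makes the second one well defined. The two sets $E$ and $\phi^n(F)$ meet only at $e_G=\phi^n(e_G)$. At that point both requirements ask for the value $c$. Then $z\in U$, and $(\phi^*)^n(z)(f)=z(\phi^n(f))=q(f)$ for $f\in F$, so $(\phi^*)^n(z)\in V$ for every $n\geq N$. This gives mixing.

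\emph{Necessity.} Suppose $g\neq e_G$ is eventually periodic, so $\Orb(g)$ is finite. Pick $b\in A\setminus\{c\}$ and set
\[
U=\{x\in X_c: x|_{\Orb(g)}\equiv c\},\qquad V=\{x\in X_c: x(g)=b\}.
\]
Both sets are nonempty open subsets of $X_c$. Note that $U$ is consistent even if $e_G\in\Orb(g)$, since the value there is $c$ anyway. For $x\in U$ and every $n\geq 0$ we have $(\phi^*)^n(x)(g)=x(\phi^n(g))=c\neq b$, because $\phi^n(g)\in\Orb(g)$. So $(\phi^*)^n(U)\cap V=\emptyset$ for all $n$, and mixing (even transitivity) fails.

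The main obstacle is the consistency of the definition of $z$ in the sufficiency direction. Lemma~\ref{lem:finite-escape} only separates $\phi^n(F)$ from $E$. The injectivity of $\phi^n$, deduced from the hypothesis via the fixed point $e_G$, is what prevents collisions inside $\phi^n(F)$.
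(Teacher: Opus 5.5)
Your proof is correct. The sufficiency half is essentially the paper's argument, with two cosmetic differences. The paper removes $e_G$ from the supports and imposes $x_n(e_G)=c$ separately, whereas you keep $e_G$ in $E$ and $F$ and note that $E\cap\phi^n(F)=\{e_G\}$, where both prescriptions ask for $c$. The paper also derives injectivity of $\phi$ from $\ker\phi$ and passes to iterates, whereas you get injectivity of $\phi^n$ directly from $\phi^n(f(f')^{-1})=e_G$.

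The necessity half is where you genuinely differ. The paper splits into two cases. If the eventual cycle of $g$ contains some $h\neq e_G$ with $\phi^p(h)=h$, it separates $\{x(h)=a\}$ from $\{x(h)=b\}$ only along the times $mp$. If the orbit is absorbed by $e_G$, it shows $(\phi^*)^n(V)\cap V=\emptyset$ only for $n\geq N$. Each case defeats mixing but not transitivity; for a $2$-cycle $h\mapsto h'\mapsto h$, the paper's first-case sets are already linked at time $1$. By prescribing $c$ on the whole finite orbit $\Orb(g)$, you handle both cases at once and get $(\phi^*)^n(U)\cap V=\emptyset$ for every $n\geq 0$.

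This gives a strictly stronger result: on each $X_c$, transitivity and mixing are equivalent to each other and to the absence of nontrivial eventually periodic elements. That statement contains Theorem~\ref{thm:Z-transitivity} as the special case $G=\ZZ$. The paper's two-case argument uses only one-coordinate cylinders and exhibits the two obstruction mechanisms (a nontrivial cycle versus absorption into $e_G$) separately, but it certifies less.
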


\begin{proof}
Suppose first that there exists a nontrivial eventually periodic element $g\in G$.

If the eventual cycle of $g$ contains a nontrivial element, then there exist $k\geq 0$ and $p\geq 1$ such that $h=\phi^k(g)\neq e_G$ and $\phi^p(h)=h$. Choose distinct symbols $a,b\in A$ and consider
\[
U=\{x\in X_c:x(h)=a\},
\qquad
V=\{x\in X_c:x(h)=b\}.
\]
For every $x\in X_c$ and every $m\geq 1$,
\[
(\phi^*)^{mp}(x)(h)=x(\phi^{mp}(h))=x(h).
\]
Therefore $(\phi^*)^{mp}(U)\cap V=\emptyset$ for all $m\geq 1$, and the system is not mixing.

If the orbit of $g$ eventually reaches $e_G$, then there exists $N\geq 0$ such that $\phi^n(g)=e_G$ for all $n\geq N$. Choose $d\in A\setminus\{c\}$ and set
\[
V=\{x\in X_c:x(g)=d\}.
\]
For every $x\in X_c$ and every $n\geq N$,
\[
(\phi^*)^n(x)(g)=x(e_G)=c.
\]
Hence $(\phi^*)^n(V)\cap V=\emptyset$ for all $n\geq N$, so the system is not mixing.

Conversely, assume that no nontrivial element is eventually periodic. Let $U,V\subseteq X_c$ be nonempty open sets. Choose cylinders
\[
C(F_1,q_1)\cap X_c\subseteq U,
\qquad
C(F_2,q_2)\cap X_c\subseteq V,
\]
where, after removing the identity coordinate if necessary, we may assume $F_1,F_2\subseteq G\setminus\{e_G\}$. By Lemma~\ref{lem:finite-escape}, there exists $N\geq 0$ such that
\[
F_1\cap \phi^n(F_2)=\emptyset
\qquad\text{for all }n\geq N.
\]
Moreover, $\phi^n(F_2)$ does not contain $e_G$, since otherwise an element of $F_2$ would eventually reach $e_G$ and hence would be eventually periodic. For each $n\geq N$, define $x_n\in A^G$ by imposing
\[
x_n(e_G)=c,
\qquad
x_n|_{F_1}=q_1,
\qquad
x_n(\phi^n(g))=q_2(g)\quad (g\in F_2),
\]
and extending arbitrarily elsewhere. These conditions are compatible because the hypothesis of the theorem also implies that $\phi$ is injective: otherwise a nontrivial element
of $\ker(\phi)$ would be eventually periodic. Therefore every iterate $\phi^n$ is also injective. Then $x_n\in U$ and $(\phi^*)^n(x_n)\in V$. Therefore $(\phi^*)^n(U)\cap V\neq\emptyset$ for all $n\geq N$, proving mixing.
\end{proof}

\begin{remark}\label{rem:mixing-vs-cofinite}
For $G=\ZZ$, mixing on the components $X_a$ and cofinite sensitivity are equivalent for the family $\{\phi_k^*\}_{k\in\ZZ}$. This equivalence does not persist over arbitrary groups. By Theorems~\ref{thm:general-dichotomy} and \ref{thm:general-mixing}, cofinite sensitivity requires the existence of at least one non-eventually-periodic element, while mixing on $X_a$ requires that no nontrivial element be eventually periodic.
\end{remark}

\begin{remark}\label{rem:full-shift-symmetries}
The discussion above does not require a separate theory of full-shift symmetries. We only use the following standard observation: if $\alpha\in\operatorname{Aut}(G)$ and $f\in\operatorname{Sym}(A)$, then the maps $x\mapsto x\circ\alpha$ and $x\mapsto f\circ x$ are homeomorphisms of $A^G$. Consequently, conjugating a self-map of $A^G$ by either of these homeomorphisms preserves equicontinuity, sensitivity, topological transitivity, and topological mixing. This explains, for example, the role of reversal in the one-dimensional case, but it is not needed as a separate ingredient in the proofs.
\end{remark}

%%%%%%%%%%%%%%%%%%%%%%%%%%%%%%%%%%%%%%%%%%

\section{Bernoulli measures and strong mixing}\label{sec:bernoulli}

In this section we record the measure-theoretic analogue of the preceding results. Throughout the section, $G$ is a countable group, $A$ is a finite alphabet with $|A|\geq 2$, and $p:A\to[0,1]$ is a strictly positive probability distribution. Let $\mu_p$ denote the Bernoulli measure on $A^G$ defined on elementary cylinders by
\[
\mu_p(C(F,q))=\prod_{g\in F}p(q(g)),
\]
where $F\subseteq G$ is finite and $q\in A^F$.

For a finite set $F\subseteq G$ and $E\subseteq A^F$, we also write
\[
C(F,E)=\{x\in A^G:x|_F\in E\}.
\]
The finite cylinders form an algebra generating the Borel $\sigma$-algebra $\calB$ of the prodiscrete topology. We shall use the standard uniqueness theorem for product measures on this cylinder algebra; equivalently, this follows from Carath\'{e}odory's Extension Theorem \cite{FollandRA}.

\begin{theorem}\label{thm:bernoulli-invariance}
Let $\phi:G\to G$ be an endomorphism. The Bernoulli measure $\mu_p$ is invariant under $\phi^*$ if and only if $\phi$ is injective.
\end{theorem}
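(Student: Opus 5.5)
We will compute the pushforward $(\phi^*)_*\mu_p$ on cylinders and compare it with $\mu_p$. By the uniqueness theorem for measures on the cylinder algebra, invariance holds if and only if
\[
\mu_p\big((\phi^*)^{-1}(C(F,q))\big)=\mu_p(C(F,q))
\]
for every finite $F\subseteq G$ and every $q\in A^F$. We have
\[
(\phi^*)^{-1}(C(F,q))=\{x\in A^G: x(\phi(g))=q(g)\ \text{for all } g\in F\}.
\]
This set is again a (possibly empty) cylinder based on the finite set $\phi(F)$.

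\emph{Injective case.} Suppose $\phi$ is injective. Then $\phi|_F:F\to\phi(F)$ is a bijection, so the preimage equals $C(\phi(F),q\circ(\phi|_F)^{-1})$. Its measure is
\[
\prod_{h\in\phi(F)}p\big(q(\phi|_F^{-1}(h))\big)=\prod_{g\in F}p(q(g))=\mu_p(C(F,q)).
\]
The two measures therefore agree on the generating algebra, since each cylinder $C(F,E)$ is a finite disjoint union of elementary cylinders. Hence $\mu_p$ is $\phi^*$-invariant.

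\emph{Non-injective case.} Suppose $\phi$ is not injective. Pick $g\neq e_G$ with $\phi(g)=e_G$, take $F=\{e_G,g\}$, and fix distinct $a,b\in A$.
\begin{itemize}
\item[(i)] For $q=(e_G\mapsto a,\ g\mapsto b)$, the preimage requires $x(e_G)=a$ and $x(e_G)=b$, so it is empty and has measure $0$. On the other hand $\mu_p(C(F,q))=p(a)p(b)>0$, because $p$ is strictly positive.
\item[(ii)] Alternatively, take $q$ with value $a$ at both points. The preimage is $C(\{e_G\},a)$, of measure $p(a)$, whereas $\mu_p(C(F,q))=p(a)^2$. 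Since $|A|\ge 2$ and $p>0$, we have $p(a)<1$, so these differ.
\end{itemize}
Either choice shows that $\mu_p$ is not invariant.

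The only delicate point is justifying that agreement on elementary cylinders suffices. This is exactly the cited uniqueness (Carath\'eodory/$\pi$-$\lambda$) statement: elementary cylinders together with $\emptyset$ form a $\pi$-system generating $\calB$, and both $\mu_p$ and $(\phi^*)_*\mu_p$ are probability measures. Measurability of $\phi^*$ is immediate from its continuity. The rest is the routine computation above.
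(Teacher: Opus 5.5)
Your proof is correct and follows essentially the same route as the paper. In the injective case you check that preimages of elementary cylinders are cylinders on distinct coordinates with the same measure and then extend by uniqueness; in the non-injective case you exhibit a positive-measure cylinder with empty preimage, where your pair $e_G$, $g\in\ker\phi$ is a special case of the paper's pair $g_1\neq g_2$ with $\phi(g_1)=\phi(g_2)$, and your alternative (ii) is a valid extra check.
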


\begin{proof}
Suppose first that $\phi$ is not injective. Choose distinct $g_1,g_2\in G$ such that $\phi(g_1)=\phi(g_2)=h$. Let $a,b\in A$ be distinct. Since $p$ is strictly positive, the cylinder
\[
C=\{x\in A^G:x(g_1)=a,\ x(g_2)=b\}
\]
has positive measure $p(a)p(b)$. However,
\[
(\phi^*)^{-1}(C)=\{x\in A^G:x(h)=a \text{ and }x(h)=b\}=\emptyset.
\]
Thus $\mu_p((\phi^*)^{-1}(C))\neq \mu_p(C)$, so $\mu_p$ is not invariant.

Conversely, suppose that $\phi$ is injective. Let $D=C(F,q)$ be an elementary cylinder, with $F=\{g_1,\ldots,g_k\}$. Then
\[
(\phi^*)^{-1}(D)=\{x\in A^G:x(\phi(g_i))=q(g_i)\text{ for }i=1,\ldots,k\}.
\]
Because $\phi$ is injective, the elements $\phi(g_1),\ldots,\phi(g_k)$ are distinct. Hence
\[
\mu_p((\phi^*)^{-1}(D))=\prod_{i=1}^k p(q(g_i))=\mu_p(D).
\]
By the uniqueness of the Bernoulli measure on the cylinder algebra, the equality extends to every Borel set. Therefore $\mu_p$ is invariant under $\phi^*$.
\end{proof}

\begin{definition}
Let $(X,\calB,\mu)$ be a probability space and let $T:X\to X$ be measure-preserving. The system $(X,\calB,\mu,T)$ is \emph{strongly mixing} if for all $B_1,B_2\in\calB$,
\[
\lim_{n\to\infty}\mu(T^{-n}(B_1)\cap B_2)=\mu(B_1)\mu(B_2).
\]
\end{definition}

When $\phi$ is injective, the measure $\mu_p$ is invariant. Nevertheless, the coordinate at $e_G$ gives a nontrivial invariant measurable set: for any $a\in A$, the cylinder $B_a=\{x:x(e_G)=a\}$ satisfies $(\phi^*)^{-1}(B_a)=B_a$ and $0<\mu_p(B_a)<1$. Hence the measure-preserving system is not ergodic, and therefore it is neither weakly mixing nor strongly mixing. The remainder of this section focuses exclusively on the punctured configuration space. Set $G^\circ=G\setminus\{e_G\}$, let $X^\circ=A^{G^\circ}$, and let $\calB^\circ$ be the corresponding Borel $\sigma$-algebra.

\begin{proposition}\label{lem:punctured-strong-mixing}
Let $\phi:G\to G$ be an injective endomorphism and let $\psi=\phi|_{G^\circ}$. The system $(X^\circ,\calB^\circ,\mu_p^\circ,\psi^*)$ is strongly mixing if and only if no element of $G^\circ$ is eventually periodic under $\psi$.
\end{proposition}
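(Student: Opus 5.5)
First I will make the setup precise. Since $\phi$ is injective and $\phi(e_G)=e_G$, we have $\phi(G^\circ)\subseteq G^\circ$, so $\psi=\phi|_{G^\circ}$ is an injective self-map of $G^\circ$. Then $\psi^*:X^\circ\to X^\circ$, $\psi^*(x)=x\circ\psi$, is well defined. Here $\mu_p^\circ$ is the Bernoulli measure on $A^{G^\circ}$. The argument of Theorem~\ref{thm:bernoulli-invariance} applies verbatim to show that $\mu_p^\circ$ is $\psi^*$-invariant: it uses only injectivity of $\psi$ on the index set and the product formula on cylinders. For all $n$ we have $(\psi^*)^n=(\psi^n)^*$, and $(\psi^*)^{-n}(C(F,q))=\{x: x(\psi^n(g))=q(g),\ g\in F\}$.

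For the direction ``some element is eventually periodic $\Rightarrow$ not strongly mixing'', I will follow the first case of the proof of Theorem~\ref{thm:general-mixing}. Suppose $g\in G^\circ$ is eventually periodic under $\psi$. Injectivity of $\psi$ forces $g$ to be genuinely periodic, say $\psi^p(g)=g$: from $\psi^{\ell+p}(g)=\psi^\ell(g)$ we may cancel $\psi^\ell$. Take distinct $a,b\in A$ and set $B_1=\{x:x(g)=a\}$ and $B_2=\{x:x(g)=b\}$. Then $(\psi^*)^{-mp}(B_1)=B_1$ for every $m\geq 1$, so $\mu_p^\circ\big((\psi^*)^{-mp}(B_1)\cap B_2\big)=0$. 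On the other hand $\mu_p^\circ(B_1)\mu_p^\circ(B_2)=p(a)p(b)>0$ because $p$ is strictly positive. The limit along the subsequence $n=mp$ is therefore wrong, and strong mixing fails. (Equivalently, one may take $B_1=B_2$ and observe that the sequence fails to converge to $p(a)^2$.)

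For the converse, assume no element of $G^\circ$ is eventually periodic. Lemma~\ref{lem:finite-escape} was stated for endomorphisms of $G$, but its proof uses only orbit finiteness, so it transfers directly to $\psi$ on $G^\circ$. Thus for finite $F_1,F_2\subseteq G^\circ$ there is $N$ with $\psi^n(F_1)\cap F_2=\emptyset$ for all $n\geq N$. Now take cylinders $D_1=C(F_1,q_1)$ and $D_2=C(F_2,q_2)$. The set $(\psi^*)^{-n}(D_1)\cap D_2$ prescribes values on $\psi^n(F_1)\sqcup F_2$; these coordinates are all distinct because $\psi^n$ is injective and the two sets are disjoint. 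By the product formula its measure is exactly $\mu_p^\circ(D_1)\mu_p^\circ(D_2)$ for $n\geq N$. Since both sides are additive in each argument, this identity extends to finite disjoint unions of cylinders, i.e.\ to all sets $C(F,E)$ of the cylinder algebra.

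The main step is passing from the cylinder algebra to arbitrary $B_1,B_2\in\calB^\circ$, and I will do it by a standard $\varepsilon/3$ approximation. Given $\varepsilon>0$, choose algebra sets $D_1,D_2$ with $\mu_p^\circ(B_i\triangle D_i)<\varepsilon$; this is possible because the algebra generates $\calB^\circ$ (approximation lemma from the Carath\'eodory construction). Invariance gives $\mu_p^\circ\big((\psi^*)^{-n}B_1\triangle(\psi^*)^{-n}D_1\big)=\mu_p^\circ(B_1\triangle D_1)<\varepsilon$. Hence
\[
\big|\mu_p^\circ((\psi^*)^{-n}B_1\cap B_2)-\mu_p^\circ((\psi^*)^{-n}D_1\cap D_2)\big|<2\varepsilon,
\qquad
|\mu_p^\circ(B_1)\mu_p^\circ(B_2)-\mu_p^\circ(D_1)\mu_p^\circ(D_2)|<2\varepsilon .
\]
Combining these with exact equality on the algebra for large $n$ yields $\limsup_n|\mu_p^\circ((\psi^*)^{-n}B_1\cap B_2)-\mu_p^\circ(B_1)\mu_p^\circ(B_2)|\leq 4\varepsilon$. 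Since $\varepsilon$ is arbitrary, strong mixing follows.
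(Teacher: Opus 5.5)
Your proof is correct and follows essentially the same route as the paper. For large $n$, the sets $\psi^n(F_1)$ and $F_2$ are disjoint, which gives exact independence on cylinders. In the other direction, a single periodic coordinate with $B_1=\{x(h)=a\}$ and $B_2=\{x(h)=b\}$ destroys the limit along multiples of the period.

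The differences are only presentational. You use injectivity of $\psi$ to make $g$ itself periodic, rather than passing to $h=\psi^k(g)$. You also carry out the extension from cylinders to all of $\calB^\circ$ by an explicit $\varepsilon$-approximation, with invariance supplying the uniform-in-$n$ bound, where the paper simply cites a monotone class argument.
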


\begin{proof}
Since $\phi$ is injective and $\phi(e_G)=e_G$, no element of $G^\circ$ maps to $e_G$. Thus $\psi:G^\circ\to G^\circ$ is well-defined. By the same argument as in Theorem~\ref{thm:bernoulli-invariance}, the Bernoulli measure $\mu_p^\circ$ is invariant under $\psi^*$. 

Assume first that no element of $G^\circ$ is eventually periodic under $\psi$. Let $B_1$ and $B_2$ be finite cylinders in $X^\circ$ with finite supports $F_1,F_2\subseteq G^\circ$. Since no point in $F_1$ is eventually periodic, the finite set $\psi^n(F_1)$ eventually avoids $F_2$. Hence there exists $N\geq 0$ such that
\[
\psi^n(F_1)\cap F_2=\emptyset
\qquad\text{for all }n\geq N.
\]
For $n\geq N$, the cylinders $(\psi^*)^{-n}(B_1)$ and $B_2$ depend on disjoint coordinate sets. Since $\mu_p^\circ$ is a product measure,
\[
\mu_p^\circ((\psi^*)^{-n}(B_1)\cap B_2)
=\mu_p^\circ((\psi^*)^{-n}(B_1))\mu_p^\circ(B_2)
=\mu_p^\circ(B_1)\mu_p^\circ(B_2),
\]
where the last equality uses invariance. Thus the strong mixing identity holds for finite cylinders. Since finite cylinders form a generating $\pi$-system, the identity extends to all Borel sets by the standard monotone class argument.

Conversely, suppose that some $g\in G^\circ$ is eventually periodic under $\psi$. Let $h=\psi^k(g)$ and $T\geq 1$ be such that $\psi^T(h)=h$. Choose distinct $a,b\in A$ and define
\[
B_1=\{x\in X^\circ:x(h)=a\},
\qquad
B_2=\{x\in X^\circ:x(h)=b\}.
\]
For every $m\geq 1$,
\[
(\psi^*)^{-mT}(B_1)\cap B_2=\emptyset,
\]
because the intersection would require the coordinate $h$ to have both values $a$ and $b$. Hence along the subsequence $mT$ the measures are equal to zero, whereas
\[
\mu_p^\circ(B_1)\mu_p^\circ(B_2)=p(a)p(b)>0.
\]
Therefore the strong mixing limit cannot hold.
\end{proof}

\begin{corollary}\label{cor:punctured-ergodicity}
If $\phi:G\to G$ is injective and $e_G$ is the only eventually periodic element of $G$ under $\phi$, then the punctured system $(X^\circ,\calB^\circ,\mu_p^\circ,\psi^*)$ is ergodic.
\end{corollary}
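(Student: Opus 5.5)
The corollary follows from Proposition~\ref{lem:punctured-strong-mixing} together with the standard fact that strong mixing implies ergodicity. The work has three parts: translate the hypothesis into the hypothesis of that proposition, invoke it, and then derive ergodicity.

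\textbf{Translating the hypothesis.} Since $\phi$ is injective and $\phi(e_G)=e_G$, the proof of Proposition~\ref{lem:punctured-strong-mixing} shows that $\psi=\phi|_{G^\circ}$ maps $G^\circ$ into $G^\circ$. Moreover, $\psi^n(g)=\phi^n(g)$ for every $g\in G^\circ$ and every $n\ge 0$. Hence the orbit of $g\in G^\circ$ under $\psi$ coincides with its orbit under $\phi$. An element of $G^\circ$ is therefore eventually periodic under $\psi$ exactly when it is eventually periodic under $\phi$. By hypothesis, only $e_G$ is eventually periodic under $\phi$, so no element of $G^\circ$ is eventually periodic under $\psi$. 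Proposition~\ref{lem:punctured-strong-mixing} then shows that $(X^\circ,\calB^\circ,\mu_p^\circ,\psi^*)$ is strongly mixing.

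\textbf{From strong mixing to ergodicity.} Let $B\in\calB^\circ$ with $(\psi^*)^{-1}(B)=B$. Then $(\psi^*)^{-n}(B)=B$ for every $n$. Apply the strong mixing identity with $B_1=B$ and $B_2=X^\circ\setminus B$. The left-hand side is
\[
\mu_p^\circ\bigl((\psi^*)^{-n}(B)\cap (X^\circ\setminus B)\bigr)=\mu_p^\circ(\emptyset)=0
\]
for all $n$. The limit is therefore $0=\mu_p^\circ(B)\bigl(1-\mu_p^\circ(B)\bigr)$, which forces $\mu_p^\circ(B)\in\{0,1\}$. Thus every invariant set is trivial, and the system is ergodic.

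\textbf{Possible variant.} If the paper's notion of ergodicity refers to sets that are invariant only modulo null sets, the same computation applies. In that case the measure of the intersection is zero for every $n$ rather than exactly empty, and the limit argument is unchanged. The only point needing a moment's care is the equality of the $\psi$- and $\phi$-orbits, and this is immediate from the injectivity of $\phi$.
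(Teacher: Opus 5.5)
Your proof is correct and follows the paper's route: you apply Proposition~\ref{lem:punctured-strong-mixing} to get strong mixing and then deduce ergodicity. You also make explicit two steps the paper leaves implicit or only cites, namely that $\psi$-orbits and $\phi$-orbits of elements of $G^\circ$ coincide, and the short derivation of ergodicity from strong mixing using $B_1=B$ and $B_2=X^\circ\setminus B$.
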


\begin{proof}
By Proposition~\ref{lem:punctured-strong-mixing}, the punctured system is strongly mixing. Strong mixing implies ergodicity; see, for example, \cite{hawkins2021ergodic}.
\end{proof}

\section{Conclusions and further directions}\label{sec:conclusions}

We have studied pullback maps $\phi^*:A^G\to A^G$ induced by group endomorphisms. Although these maps have the simplest possible local rule, their dynamics are governed by the algebraic behavior of the forward orbits of $\phi$.

For $G=\ZZ$, the multiplier $k$ completely determines the dynamics: the maps with $k\in\{-1,0,1\}$ are equicontinuous, while all others are cofinitely sensitive and mixing on the invariant components determined by the value at the origin. For countable groups, the corresponding classification separates into two algebraic conditions. Equicontinuity is equivalent to eventual periodicity of all elements of $G$, while cofinite sensitivity is equivalent to the existence of at least one non-eventually-periodic element. Mixing on the invariant components is stronger: it requires the absence of nontrivial eventually periodic elements.

The measure-theoretic results show a parallel but distinct picture. A strictly positive Bernoulli measure is preserved exactly when the endomorphism is injective. Strong mixing is impossible on the full shift because the identity coordinate is fixed, but it holds on the punctured space exactly when the punctured dynamics have no eventually periodic elements.

Several questions remain open. The most natural next step is to study general $\phi$-cellular automata with nontrivial memory sets and local rules. It would also be interesting to determine when ordinary sensitivity implies cofinite sensitivity within broader classes of generalized cellular automata, and to extend the analysis to compact non-discrete alphabets.

\section*{Acknowledgments}

The second author was supported by a SECIHTI Postdoctoral Fellowship \emph{Estancias Posdoctorales por M\'exico}, No. I1200/320/2022.  

\section*{Use of artificial intelligence}

During the preparation of this manuscript, the authors used the artificial intelligence tools ChatGPT 5.5 and Gemini 3.1 Pro to assist with language editing, organization of the exposition, and preliminary feedback on the presentation and consistency of some arguments. All mathematical statements, proofs, references, and final editorial decisions were independently checked and verified by the authors, who take full responsibility for the content of the manuscript.

\end{document}